\newcommand{\lma}{\lambda_p}
\newcommand{\Sp}{S^{V}_p}
\newcommand{\pn}[1]{\left\lVert #1 \right \rVert_p}
\newcommand{\R}{\mathbb{R}}
\newcommand{\sg}[1]{\textnormal{sign}\left(#1\right)}
\newcommand{\cut}{\textnormal{cut}}
\newtheorem{lemma}{Lemma}
\newtheorem{prop}[lemma]{Proposition}
\newtheorem{coro}[lemma]{Corollary}
\newtheorem{theorem}[lemma]{Theorem}
\numberwithin{lemma}{section}
\numberwithin{fact}{section}
\numberwithin{equation}{section}
\title{Eigenvalue bounds for the signless $p$-Laplacian}
\author[1]{Elizandro Max Borba}
\author[2]{Uwe Schwerdtfeger}
\affil[1]{Instituto de Matem\'atica, Universidade Federal do Rio Grande do Sul\\

CEP 91509-900, Porto Alegre, RS, Brazil\\

\textit {elizandro.max@ufrgs.br}}
\affil[2]{Fakult\"at f\"ur Mathematik, Technische Universit\"at Chemnitz\\

D-09107 Chemnitz, Germany\\

\textit {uwe.schwerdtfeger@mathematik.tu-chemnitz.de}}
\begin{document}
\maketitle

\begin{abstract}
We consider the signless $p$-Laplacian of a graph, a generalisation of the usual signless Laplacian (the case $p=2$). We show a Perron-Frobenius property and basic inequalites for the largest eigenvalue and provide upper and lower bounds for the smallest eigenvalue in terms of a parameter related to the bipartiteness. The latter result generalises bounds by Desai and Rao and, interestingly, in the limit $p\to 1$ upper and lower bounds coincide.
 
\noindent
\textbf{Keywords:} signless Laplacian, signless p-Laplacian, eigenvalue bound

\noindent
\textbf{MSC 2010:} 05C50, 05C40, 15A18

\end{abstract}

\section{Introduction}

We begin with some notation. All graphs are simple and undirected without loops or multiple edges. For a graph $G=(V,E)$ with vertex set $V$ the edge set $E$ hence consists of two-element subsets of $V$ and we shall frequently write $ij$ rather than $\{i,j\}$ for an edge. For disjoint subsets $S,T\subseteq V$ define $E_G(S)$ as the edges $ij\in E$ spanned by $S,$ $E_G(S,T)$ the edges with one vertex in $S$ and the other in $T,$ furthermore we define $n=|V|,$ $e_G(S)=|E_G(S)|,$ $e_G(S,T)=|E_G(S,T)|$ and $\cut_G(S)=|E_G(S,V\setminus S)|.$ Furthermore $d_i,i\in V$ is the degree of vertex $i$ and $\delta(G),\Delta(G)$ denote the minimum and maximum degree, respectively.  We shall drop the subscripts if $G$ is clear from the context. Let $A$ be the adjacency matrix and $D$ be the diagonal matrix of vertex degrees. We also find it convenient to index vectors and matrices associated with a graph by the vertex set $V$ and write therefore $x=(x_i,\in V)\in \R^V$ (rather than $x\in \R^n$) and consider accordingly $A$ 
and $D$ as 
operators $\R^V\to \R^V,$ similarly for $x\in \R^V$ we understand $x^\top$ as the functional $\R^V\to\R,$ $y\mapsto x^\top y=\sum_{i\in V}x_iy_i.$ 

The eigenvalues and -vectors of the Laplacian $L:=D-A$ are well studied, in particular the second smallest eigenvalue $a(G),$ the algebraic connectivity of $G.$ It is non-zero if and only if $G$ is connected and by a well-known inequality due to Mohar \cite{Mohar89} it can be upper and lower bounded in terms of the isoperimetric number of $G$
\[
 i(G)=\min\left\{\frac{\cut(S)}{|S|},\;S\subseteq V,\;0<|S|\le\frac{n}{2} \right\}.
\]
Eigenvectors for $a(G)$ are used in spectral partitioning. Following Amghibech's work \cite{Amghibech03} B\"uhler and Hein \cite{BuehlerHein09} introduced for $p>1$ a non-negative functional $L_p(x)=\sum_{ij\in E}|x_i-x_j|^p$ on $\R^V$ (for $p=2$ the quadratic form of $L$) and called the non-linear operator 
\[
\frac{1}{p}\nabla_x L_p\colon \R^V\to \R^V,\quad \frac{1}{p}\nabla_x L_p(x)_i= \sum_{j\colon ij\in E}\sg{x_i-x_j} |x_i-x_j|^{p-1}
\]
the $p$-Laplacian. A pair $(\mu,x),$ $x\neq 0,$ is called an eigenvalue and -vector of $L_p$ if the eigenequations
\[
\frac{1}{p}\nabla_x L_p(x)_i=\sum_{j\colon ij\in E}\sg{x_i-x_j} |x_i-x_j|^{p-1}=\mu\sg{x_i}x_i^{p-1},\,i\in V,
\]
are satisfied where \[\sg{x}=\begin{cases}1 \text{ if }x>0,\\ -1\text{ if }x <0,\\ 0\text{ if }x =0.\end{cases}\] Observe that in this case $p^{-1}x^\top\nabla_x L_p(x)=L_p(x)=\mu \pn{x}^p$ and that the eigenequations are necessary conditions for the optimisation problems
\[
 \min\,(\text{resp. } \max) L_p(x) \text{ s.t. }\pn{x}^p=1
\]
which for $p=2$ are the Rayleigh-Ritz characterisations of the smallest (resp. largest) eigenvalue of $L.$ The minimum is always zero, attained by a non-zero multiple of the all ones vector ${\bf 1}$ and these are the only solutions if and only if $G$ is connected. Observing that ${\bf 1}^\top \nabla_x L_p(x)=0$ for any $x$ one is lead to characterise the second smallest eigenvalue $a_p(G)$ as
\[
 a_p(G)=\min L_p(x) \text{ s.t. }\pn{x}^p=1 \text{ and }\sum_{i\in V}\sg{x_i}|x_i|^{p-1}=0
\]
B\"uhler and Hein \cite{BuehlerHein09} give the following bounds 
\begin{equation}\label{eq:Cheeger}
 \left(\frac{2}{\Delta} \right)^{p-1}\left(\frac{i(G)}{p} \right)^{p}\le a_p(G) \le 2^{p-1}i(G),
\end{equation}
where $\Delta$ is the maximum degree.
Interestingly, in the limit $p\to 1$ upper and lower bounds coincide and they show that thresholding of the eigenvector yields in the limit the optimal cut.

Only recently the signless Laplacian matrix $Q=D+A$ has received greater interest from spectral graph theorists, although the first remarkable result dates back to 1994. The quadratic form of $Q$ is $x^\top Q x=\sum_{ij\in E}(x_i+x_j)^2$ and hence $Q$ is positive semidefinite. The smallest eigenvalue $q(G)$ is $0$ if and only if $G$ has a bipartite component. The results in \cite{DesRao94} state roughly that a small $q(G)$ indicates the existence of a (near) bipartite subgraph which is not very well connected to the rest of the graph, and vice versa. More precisely, define the parameter $\psi(G)$ as 
\begin{equation}\label{eq:psi}
 \psi(G)=\min\left\{\frac{2e(S)+2e(T)+\cut(S\cup T)}{|S\cup T|}\colon S,T\subseteq V,\;S\cap T=\emptyset,\;S\cup T\neq\emptyset    \right\}.
\end{equation}
Then we have upper and lower bounds reminiscent of \eqref{eq:Cheeger} for $p=2,$ namely
\begin{equation}\label{eq:DesaiRao}
 \frac{\psi(G)^2}{2\Delta}\le q(G)\le 2 \psi(G)
\end{equation}
Observe that for a minimising pair $S,T$ in \eqref{eq:psi} the number $e(s)+e(T)$ is the smallest number of edges to be removed from the induced subgraph on $S\cup T$ to make it bipartite and thus the bounds quite well capture the informal statement.
We also remark that the definition of $\psi$ and also the bounds differ (superficially) from those in \cite{DesRao94}. We followed the exposition of \cite{FallatFan12} where bounds on $q(G)$ in terms of edge and vertex bipartiteness and a stronger lower bound in terms of $\psi$ are established. Finding (near) bipartite substructures in graphs is of practical interest in the study of social networks and bioinformatics as pointed out in \cite{KirklandPaul11}, where the authors devise spectral techniques involving eigenvectors for $q(G)$ to obtain such structures.

In this note we consider for real $p\ge 1$ the non-negative convex functional 
\[
Q_p\colon \R^V\to \R,\quad Q_p(x)=\sum_{ij\in E}|x_i+x_j|^p
\] 
(i.e. $Q_2(x)=x^\top Qx$) and its extremal values on the $p$-norm unit sphere 
\[\Sp=\left\{x\in \R^V\colon \sum_{i\in V}|x_i|^p=1 \right\}
\] 
or, equivalently, extremal values of $R_p(x):=Q_p(x)/\pn{x}^p$ on $\R^V\setminus\{0\}.$ For $p=2$ this is the Rayleigh-Ritz characterisation of the smallest and largest eigenvalues.  Let 
\begin{equation}\label{eq:q}
q_p(G)= \min\left\{ Q_p(x) \colon \pn{x}^p=\sum_{i\in V}|x_i|^p=1 \right\}=\min\left\{ R_p(x) \colon x\neq 0 \right\}
\end{equation}
and in an analogous fashion define by $\lma(G)$ the maximum. We call $q_p(G)$ and $\lma(G)$ the smallest and largest eigenvalue of $Q_p,$ respectively and minimising (maximising) vectors eigenvectors. Our study of $Q_p$ is strongly motivated by the inequalities \eqref{eq:DesaiRao} and \eqref{eq:Cheeger} and our main result is
\begin{theorem}\label{theo:main} For $p\ge 1$ the smallest signless $p$-Laplacian eigenvalue $q_p(G)$ satisfies
\[
 \left(\frac{2}{\Delta} \right)^{p-1}\left(\frac{\psi(G)}{p} \right)^{p}\le q_p(G) \le 2^{p-1}\psi(G).
\]
In particular, we have that $q_1(G)=\psi(G).$ 
\end{theorem}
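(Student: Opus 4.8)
The plan is to prove the two inequalities separately: the upper bound is a direct test-vector computation, while the lower bound needs a co-area (layer-cake) argument fed into a single pointwise inequality and then Hölder. For the upper bound I would take a minimising pair $S,T$ in \eqref{eq:psi} and evaluate $R_p$ on the vector $x$ with $x_i=1$ for $i\in S$, $x_i=-1$ for $i\in T$ and $x_i=0$ otherwise. Each edge inside $S$ or inside $T$ contributes $2^p$, each edge between $S$ and $T$ contributes $0$, and each of the $\cut(S\cup T)$ edges leaving $S\cup T$ contributes $1$, while $\pn{x}^p=|S\cup T|$. Hence
\[
q_p(G)\le R_p(x)=\frac{2^p(e(S)+e(T))+\cut(S\cup T)}{|S\cup T|}\le 2^{p-1}\,\frac{2(e(S)+e(T))+\cut(S\cup T)}{|S\cup T|}=2^{p-1}\psi(G),
\]
where the middle step only uses $2^{p-1}\ge 1$ to raise the coefficient of $\cut(S\cup T)$ from $1$ to $2^{p-1}$. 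For $p=1$ this is an equality, already giving $q_1(G)\le\psi(G)$.

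For the lower bound, fix $x\neq0$ and set $w_i=\sg{x_i}|x_i|^p$. For $t>0$ put $P_t=\{i:x_i\ge t\}$ and $N_t=\{i:x_i\le -t\}$; these are disjoint, so the definition of $\psi(G)$ yields $2e(P_t)+2e(N_t)+\cut(P_t\cup N_t)\ge\psi(G)(|P_t|+|N_t|)$ whenever $P_t\cup N_t\neq\emptyset$ (and $0\ge 0$ otherwise). I would integrate this against the measure $p\,t^{p-1}dt$ on $(0,\infty)$. On the right $\int_0^\infty(|P_t|+|N_t|)\,p t^{p-1}\,dt=\pn{x}^p$. On the left a short edge-by-edge computation, distinguishing whether the two endpoints of $ij$ have equal or opposite signs, shows that edge $ij$ contributes exactly $|x_i|^p+|x_j|^p$ (equal signs) resp.\ $\bigl||x_i|^p-|x_j|^p\bigr|$ (opposite signs), and in both cases this equals $|w_i+w_j|$. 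Thus
\[
\psi(G)\pn{x}^p\le\sum_{ij\in E}|w_i+w_j|.
\]
Taking $p=1$ here gives $w=x$ and hence $q_1(G)\ge\psi(G)$, which with the upper bound proves $q_1(G)=\psi(G)$.

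It remains to bound $\sum_{ij}|w_i+w_j|$ by $Q_p(x)$. The key pointwise estimate I would establish is
\[
|w_i+w_j|=\bigl|\sg{x_i}|x_i|^p+\sg{x_j}|x_j|^p\bigr|\le p\,|x_i+x_j|\left(\frac{|x_i|^p+|x_j|^p}{2}\right)^{\frac{p-1}{p}}.
\]
Applying Hölder with exponents $p$ and $p/(p-1)$, and then using the identity $\sum_{ij\in E}\tfrac12(|x_i|^p+|x_j|^p)=\tfrac12\sum_{i}d_i|x_i|^p\le\tfrac{\Delta}{2}\pn{x}^p$, gives
\[
\sum_{ij\in E}|w_i+w_j|\le p\,Q_p(x)^{1/p}\left(\frac{\Delta}{2}\pn{x}^p\right)^{\frac{p-1}{p}}.
\]
Combining this with the co-area bound, dividing by $\pn{x}^{p-1}$ and raising to the $p$-th power yields $\psi(G)^p\pn{x}^p\le p^p(\Delta/2)^{p-1}Q_p(x)$, that is, $R_p(x)\ge(2/\Delta)^{p-1}(\psi(G)/p)^p$; minimising over $x$ finishes the proof. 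Note that the factor $\tfrac12$ inside the power mean, supplied by the averaging $\sum_{ij}(|x_i|^p+|x_j|^p)=\sum_i d_i|x_i|^p$ over edges, is precisely what produces the $2^{p-1}$ in the final bound; replacing the power mean by $\max(|x_i|,|x_j|)$ would only give the weaker constant $(1/\Delta)^{p-1}$.

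The main obstacle is the displayed pointwise inequality. Writing $a=|x_i|$, $b=|x_j|$ and normalising $a=1$, $b=t\in[0,1]$, the equal-sign case reduces to $(1+t^p)^{1/p}\le p(1+t)/2^{(p-1)/p}$, which follows from $\lVert(1,t)\rVert_p\le\lVert(1,t)\rVert_1$ together with the elementary fact $2^{(p-1)/p}\le p$ for $p\ge1$. The opposite-sign case reduces to $1-t^p\le p(1-t)\bigl(\tfrac{1+t^p}{2}\bigr)^{(p-1)/p}$, equivalently to showing that the divided difference $\tfrac{a^p-b^p}{p(a-b)}$ is dominated by the power mean $\bigl(\tfrac{a^p+b^p}{2}\bigr)^{(p-1)/p}$; this is asymptotically tight both as $t\to1$ and as $p\to1$, and I expect to prove it by showing that the difference of the two sides is monotone in $t$ via a single differentiation. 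Establishing this one-variable inequality cleanly is the crux; the co-area identity and the Hölder/degree bookkeeping are routine.
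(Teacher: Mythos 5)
Your proposal is correct, and for the lower bound it takes a genuinely different route from the paper's. The paper proceeds by graph surgery: it builds an auxiliary graph $G^\prime$ in which every edge inside $S=\{x_i>0\}$ or $T=\{x_i<0\}$ is unfolded into two edges to duplicated vertices, passes to $g=|x|$ so that $Q_p(x)$ on $G$ dominates the $p$-Laplacian form $\sum_{ij\in E^\prime}|g_i-g_j|^p$ on $G^\prime$, and then invokes a Cheeger-type lemma in the style of B\"uhler--Hein on $G^\prime$, with one-sided level sets $C^t_g$ decoding back to the pairs $(S^t,T^t)$ and giving $h_g(S\cup T)\ge\psi(G)$. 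You instead run the co-area argument directly on $G$ with two-sided level sets $P_t,N_t$ and the signed powers $w_i=\sg{x_i}|x_i|^p$; your edge-by-edge identity (equal signs give $|x_i|^p+|x_j|^p$, opposite signs give $\bigl||x_i|^p-|x_j|^p\bigr|$, both equal to $|w_i+w_j|$) achieves analytically exactly what the paper's unfolding achieves combinatorially, since an unfolded edge pair contributes $g_i^p+g_j^p$ and a retained edge contributes $|g_i^p-g_j^p|$. I checked your co-area computation, the H\"older/degree bookkeeping and the final exponent manipulation: all correct, and they do yield $R_p(x)\ge(2/\Delta)^{p-1}(\psi(G)/p)^p$. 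Your version needs no auxiliary graph, no connectivity assumption, and applies to arbitrary $x\neq 0$ rather than a minimiser; the paper's version buys the ability to reuse the B\"uhler--Hein lemma essentially verbatim, with $\Delta(G^\prime)=\Delta(G)$ the only extra check. The upper bound is identical to the paper's (Lemma 2.2), and your $p=1$ conclusion $q_1(G)=\psi(G)$ is sound.

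The one unfinished point is your ``crux'' inequality in the opposite-sign case, and there you are working harder than necessary: no differentiation or monotonicity analysis is needed, because it is precisely the divided-difference estimate the paper proves inside Lemma 2.3. For $a>b\ge 0$ write
\[
\frac{a^p-b^p}{p(a-b)}=\frac{1}{a-b}\int_b^a t^{p-1}\,dt\le\left(\frac{1}{a-b}\int_b^a t^{p}\,dt\right)^{(p-1)/p}\le\left(\frac{a^p+b^p}{2}\right)^{(p-1)/p},
\]
where the first inequality is the power mean (Jensen) inequality for the uniform average on $[b,a]$ applied to $t\mapsto t^{p-1}$ with exponent $q=p/(p-1)$, and the second is convexity of $t^p$ (the secant estimate). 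This closes the gap. Your equal-sign case is fine as written: $(a^p+b^p)^{1/p}\le a+b$ together with $2^{(p-1)/p}\le p$ for $p\ge 1$, the latter because $f(p)=\ln p-(1-1/p)\ln 2$ vanishes at $p=1$ and $f^\prime(p)=(p-\ln 2)/p^2>0$; note this elementary fact is implicitly the same one the paper consumes on unfolded edges, where the displayed estimate degenerates at $b=0$ to $a^{p-1}\le p\,(a^p/2)^{(p-1)/p}$. With the displayed estimate supplied, your proof is complete.
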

The proof of Theorem \ref{theo:main} also shows how a ''good`` pair $(S,T)$ (in view of \eqref{eq:psi}) can be obtained by thresholding a solution to the optimisation problem \eqref{eq:q}: 
\begin{coro}\label{coro:main}
 For $p\ge 1$ let $x=x(p)$ be an eigenvector for $q_p(G)$ (i.e. $Q_p(x)=q_p(G)\pn{x}^p$) and define for $t\ge 0$ the vertex sets $S_{x}^t=\{i\in V\colon x_i>t\}$ and $T_{x}^t=\{i\in V\colon x_i<-t\}$ and let furthermore 
\[
\psi(x)=\min\left\{ \frac{2e_G\left(S_{x}^t\right)+2e_G\left(T_{x}^t\right)+\textnormal{cut}_G\left(S_{x}^t\cup T_{x}^t\right)}{\left|S_{x}^t\cup T_{x}^t\right|}\colon 0\le t< \max_{i\in V}\{|x_i|\} \right\}.
\]
Then $\psi\left(x(p)\right)\to \psi(G)$ as $p\to 1$ and in particular for $p=1,$ $\psi(G)=\psi(x(1)).$  
\end{coro}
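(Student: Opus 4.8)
The plan is to prove the two inequalities $\psi(G)\le\psi(x(p))$ and $\psi(x(p))\le p\,\Delta^{(p-1)/p}\psi(G)^{1/p}$, and then let $p\to 1$. The first (and trivial) direction follows directly from the definition of $\psi(G)$: for every threshold $t$ with $0\le t<\max_i|x_i|$ the sets $S_{x}^{t}$ and $T_{x}^{t}$ are disjoint (no coordinate can satisfy both $x_i>t\ge 0$ and $x_i<-t\le 0$) and their union is nonempty, so the pair $(S_{x}^{t},T_{x}^{t})$ is admissible in \eqref{eq:psi}. Hence its Desai--Rao ratio is at least $\psi(G)$, and taking the minimum over $t$ gives $\psi(x(p))\ge\psi(G)$ for every $p$.

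For the second direction I would not invoke the lower bound of Theorem~\ref{theo:main} as a black box, but use its pointwise form instead. The coarea/H\"older argument behind that bound in fact establishes, for every $x\neq 0$, the stronger inequality $R_p(x)\ge(2/\Delta)^{p-1}(\psi(x)/p)^p$, the global statement being recovered only at the end by inserting $\psi(x)\ge\psi(G)$. Applying this pointwise inequality to the eigenvector $x=x(p)$, for which $R_p(x(p))=q_p(G)$, and combining with the upper bound $q_p(G)\le 2^{p-1}\psi(G)$ of the same theorem, yields
\[
\left(\frac{2}{\Delta}\right)^{p-1}\left(\frac{\psi(x(p))}{p}\right)^p\le q_p(G)\le 2^{p-1}\psi(G).
\]
Discarding $q_p(G)$ and solving for $\psi(x(p))$ gives $(\psi(x(p))/p)^p\le\Delta^{p-1}\psi(G)$, i.e. $\psi(x(p))\le p\,\Delta^{(p-1)/p}\psi(G)^{1/p}$.

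Combining the two directions we obtain $\psi(G)\le\psi(x(p))\le p\,\Delta^{(p-1)/p}\psi(G)^{1/p}$. As $p\to 1^{+}$ the rightmost expression tends to $\psi(G)$, since $p\to 1$, $\Delta^{(p-1)/p}\to\Delta^{0}=1$ and $\psi(G)^{1/p}\to\psi(G)$; the squeeze then forces $\psi(x(p))\to\psi(G)$. At $p=1$ the upper fence equals $\psi(G)$ exactly, so together with $\psi(x(1))\ge\psi(G)$ this gives $\psi(x(1))=\psi(G)$. (If $\psi(G)=0$, i.e. $G$ has a bipartite component, both fences vanish and the claim is immediate; otherwise $\psi(G)>0$ makes the limits above genuine continuity statements.) I would also note at the outset that a minimiser $x(p)$ exists because $R_p$ is continuous and $0$-homogeneous while $\Sp$ is compact.

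The main obstacle is not in the corollary itself but in legitimising the use of the pointwise inequality $R_p(x)\ge(2/\Delta)^{p-1}(\psi(x)/p)^p$: one must verify that the computation proving the lower bound of Theorem~\ref{theo:main} really produces $\psi(x)$ for the \emph{given} vector and with \emph{exactly} the constant $(2/\Delta)^{p-1}p^{-p}$, rather than some weaker constant, since it is precisely this constant that makes the two fences collapse to $\psi(G)$ in the limit. The delicate point there is the edge-by-edge estimate used to pass between $Q_p(x)$ and the thresholded cut quantity, where the opposite-sign edges behave differently for $1<p<2$ than for $p\ge 2$. Once that pointwise form is granted, the remainder is the elementary squeeze above.
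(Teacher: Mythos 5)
Your proposal is correct and is essentially the paper's own argument: the paper's proof of the lower bound of Theorem~\ref{theo:main} never uses the eigenequations, and it explicitly remarks at the end that it has ``actually proven a sharper lower bound with $\psi(G)$ replaced by $\psi(x)$'' --- precisely your pointwise inequality $R_p(x)\ge (2/\Delta)^{p-1}(\psi(x)/p)^p$ --- so the corollary follows, as you say, by combining this with the upper bound $q_p(G)\le 2^{p-1}\psi(G)$ and squeezing as $p\to 1$. The one point you flag as delicate is in fact unproblematic: in the comparison \eqref{eq:gx} the opposite-sign edges (those in $E_G(S,T)$) contribute with exact equality $|g_i-g_j|^p=|x_i+x_j|^p$ for every $p\ge 1$, the inequality arising only from same-sign edges via $|g_i|^p+|g_j|^p\le\bigl(|x_i|+|x_j|\bigr)^p$.
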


\noindent
{\bf Outline.} In the next section we provide a proof of the theorem and its corollary. In the subsequent section we discuss some basic properties of the largest eigenvalue. 

\section{Proof of Theorem \ref{theo:main}}

In this section we assume throughout that $p\ge 1$. First we shall see that Theorem \ref{theo:main} is trivially true for graphs with a bipartite connected component:
\begin{lemma}
 The smallest eigenvalue $q_p(G)$ is zero if and only if $G$ has a bipartite component.
\end{lemma}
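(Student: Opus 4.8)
The plan is to exploit that $Q_p$ is a sum of non-negative terms, so since the minimum defining $q_p(G)$ is attained on the compact sphere $\Sp$, we have $q_p(G)=0$ exactly when $Q_p(x)=0$ for some nonzero $x$, i.e.\ when every summand $|x_i+x_j|^p$ vanishes. Because $|t|^p=0$ if and only if $t=0$ for every $p\ge 1$, the condition $Q_p(x)=0$ is equivalent to $x_i+x_j=0$ for all $ij\in E$, \emph{independently of $p$}. The statement therefore reduces to the purely combinatorial claim that a nonzero vector with $x_i=-x_j$ along every edge exists if and only if $G$ has a bipartite component.

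For the direction ``$\Leftarrow$'' I would start from a bipartite component $C$ with colour classes $A,B$ and take the test vector $x$ equal to $+1$ on $A$, $-1$ on $B$ and $0$ elsewhere. Every edge either lies inside $C$, where it joins $A$ to $B$ and hence contributes $|x_i+x_j|^p=0$, or lies entirely outside $C$, where both endpoints carry value $0$; no edge crosses between $C$ and its complement since $C$ is a connected component. Thus $Q_p(x)=0$ while $\pn{x}^p=|C|>0$, so $q_p(G)=0$.

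For ``$\Rightarrow$'', suppose $q_p(G)=0$ and pick a minimiser $x\neq 0$, so that $x_i=-x_j$ on every edge. Choose a vertex $v$ with $x_v\neq 0$ and let $C$ be its connected component. Along each edge the value merely changes sign, so $|x_i|$ is constant on $C$ and equals $|x_v|>0$; in particular no vertex of $C$ receives the value $0$. Setting $A=\{i\in C\colon x_i=x_v\}$ and $B=\{i\in C\colon x_i=-x_v\}$ partitions $C$, and since the value flips across every edge, every edge of $C$ runs between $A$ and $B$. Hence $C$ is bipartite, completing the proof.

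The argument is essentially elementary, and I expect no serious obstacle. The only point worth flagging is the reduction in the first paragraph: the zero set of $Q_p$ does not depend on $p$, so the nonlinearity plays no role here and the familiar $p=2$ reasoning connecting sign-alternating vectors to bipartiteness carries over verbatim.
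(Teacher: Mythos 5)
Your proof is correct and takes essentially the same route as the paper: observe that $Q_p(x)=0$ forces $x_i=-x_j$ on every edge (independently of $p$), construct the $\pm 1$ test vector from a bipartite component for one direction, and read off a bipartition from the signs of a nonzero minimiser for the other. Your version merely spells out details the paper leaves implicit, such as attainment of the minimum on the compact sphere $\Sp$ and the constancy of $|x_i|$ on the component.
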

\begin{proof}
 If there is a bipartite component with partition $S\cup T$ put $x_i=1,i\in S,$ $x_i=-1,i\in T$ and $x_i=0$ elsewhere. Then $Q_p(x)=0.$ Conversely, $Q_p(x)=0$ implies $x_i=-x_j$ for every $ij\in E$ and thus a connected component containing a vertex $i$ with $x_i\neq 0$ is bipartite.
\end{proof}
The next lemma (\cite{DesRao94,FallatFan12} for $p=2$) yields the upper bound in Theorem \ref{theo:main}.
\begin{lemma}\label{lem:upperbound}
Let $S,T\subseteq V,$ $S\cap T=\emptyset.$ Then we have
 \[
  q_p(G)|S\cup T|\le 2^p e(S)+ 2^p e(T) +\cut(S\cup T)
 \]
and in particular $q_p(G)\le 2^{p-1}\psi(G).$
\end{lemma}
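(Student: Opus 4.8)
The plan is to exploit the minimum characterisation in \eqref{eq:q}: since $q_p(G)$ is the minimum of the Rayleigh quotient $R_p$, every nonzero test vector $x$ furnishes an upper bound $q_p(G)\le R_p(x)=Q_p(x)/\pn{x}^p$. For the given disjoint sets $S,T$ I would plug in the natural $\pm1$/$0$ indicator, namely $x_i=1$ for $i\in S$, $x_i=-1$ for $i\in T$ and $x_i=0$ otherwise, which is exactly the bipartition vector already used in the preceding zero-eigenvalue lemma. (If $S\cup T=\emptyset$ both sides of the asserted inequality vanish, so I would assume $S\cup T\neq\emptyset$ from here on.)

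For this $x$ the denominator is immediate: $\pn{x}^p=\sum_{i\in V}|x_i|^p=|S|+|T|=|S\cup T|$ by disjointness. The numerator $Q_p(x)=\sum_{ij\in E}|x_i+x_j|^p$ I would evaluate by splitting the edges according to where their endpoints lie. An edge inside $S$ or inside $T$ contributes $|{\pm2}|^p=2^p$, giving $2^pe(S)+2^pe(T)$ in total; an edge between $S$ and $T$ contributes $|1-1|^p=0$; an edge from $S\cup T$ to $V\setminus(S\cup T)$ contributes $|{\pm1}|^p=1$, accounting for $\cut(S\cup T)$; and edges inside $V\setminus(S\cup T)$ contribute $0$. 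Summing, $Q_p(x)=2^pe(S)+2^pe(T)+\cut(S\cup T)$, and multiplying $q_p(G)\le Q_p(x)/\pn{x}^p$ through by $\pn{x}^p=|S\cup T|$ yields the first claimed inequality.

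For the \emph{in particular} statement I would specialise to a pair $(S,T)$ attaining the minimum in \eqref{eq:psi}. Since $p\ge 1$ gives $2^{p-1}\ge 1$, the cut edges satisfy $\cut(S\cup T)\le 2^{p-1}\cut(S\cup T)$, whence
\[
2^pe(S)+2^pe(T)+\cut(S\cup T)\le 2^{p-1}\bigl(2e(S)+2e(T)+\cut(S\cup T)\bigr).
\]
Dividing by $|S\cup T|$ and invoking the inequality just established gives $q_p(G)\le 2^{p-1}\psi(G)$.

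I expect no genuine obstacle here; the only care required is the exhaustive-but-routine case split over edge types in the computation of $Q_p(x)$. The one conceptual point worth flagging is the origin of the factor $2^{p-1}$: it is precisely the slack incurred on the cut edges, which enter the numerator with weight $1$ rather than the weight $2^{p-1}$ they would need to match $2^{p-1}\psi(G)$ exactly. This also foreshadows why the bound tightens as $p\to 1$, where $2^{p-1}\to 1$ and the slack disappears.
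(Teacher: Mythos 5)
Your proposal is correct and follows essentially the same route as the paper: the same $\pm 1$/$0$ indicator test vector in the Rayleigh quotient characterisation \eqref{eq:q}, the same edge-type computation of $Q_p(x)$, and the same specialisation to a minimising pair for $\psi$ with the factor $2^{p-1}$ absorbing the slack on the cut edges. Your write-up merely makes explicit the case split and the $S\cup T=\emptyset$ degeneracy that the paper leaves as ``a computation shows.''
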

\begin{proof}
Let $S\cup T\neq\emptyset,$ otherwise the assertion is trivial. Define $x$ by $x_i=1,i\in S,$ $x_i=-1,i\in T$ and $x_i=0$ elsewhere. Then a computation shows
\[
q_p(G)\le \frac{Q_p(x)}{\pn{x}^p}=\frac{2^p e(S)+ 2^p e(T) +\cut(S\cup T)}{|S\cup T|}.
\]
If $S,T$ are chosen as optimal sets in the definition \eqref{eq:psi} of $\psi$ this last expression is $\le 2^{p-1}\psi.$
\end{proof}
For the proof of the lower bound we proceed as in \cite{DesRao94} and \cite{BuehlerHein09}. For a graph $G^\prime=(V^\prime,E^\prime)$ fix a subset $U\subseteq V^\prime.$ 
For a vector $g\in \R^{V^\prime}$ with $g_i>0$ on $U$ and $g_i=0$ elsewhere define $C^t_g=\{ i\in U\colon g_i>t \}$ and
\begin{equation}\label{eq:defhg}
 h_g(U)=\min \left\{\frac{\cut_{G^\prime}\left(C^t_g\right)}{\left|C^t_g\right|}\colon 0\le t < \max\{x_i,i\in U\} \right\}.
\end{equation}
We have the following lemma.
\begin{lemma}\label{lem:est} With the above notation and $\Delta(G^\prime)$ the maximum degree of $G^\prime$ we have 
 \begin{equation}
  \left(\frac{2}{\Delta(G^\prime)} \right)^{p-1}\left(\frac{h_g(U)}{p} \right)^{p}\pn{g}^p\le\sum_{ij\in E^\prime}|g_i-g_j|^p.
 \end{equation}
\end{lemma}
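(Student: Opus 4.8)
The plan is to adapt the coarea (layer-cake) method of Bühler and Hein \cite{BuehlerHein09}, reducing the whole estimate to a single scalar inequality that is responsible for the sharp constant. Throughout I abbreviate $m:=\sum_{ij\in E^\prime}|g_i-g_j|^p$ and note $\pn{g}^p=\sum_{i\in U}g_i^p$, since $g$ vanishes off $U$. First I would record the identity
\[
\sum_{ij\in E^\prime}\left|g_i^p-g_j^p\right|=\int_0^\infty \cut_{G^\prime}\!\left(C_g^t\right) p\,t^{p-1}\,dt,
\]
which holds because an edge $ij$ with $g_i>g_j\ge0$ is counted in $\cut_{G^\prime}(C_g^t)$ exactly for $t\in[g_j,g_i)$ and $\int_{g_j}^{g_i}p\,t^{p-1}\,dt=g_i^p-g_j^p$. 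The definition \eqref{eq:defhg} gives $\cut_{G^\prime}(C_g^t)\ge h_g(U)\left|C_g^t\right|$ for every admissible $t$, while the same layer-cake computation yields $\int_0^\infty\left|C_g^t\right|p\,t^{p-1}\,dt=\sum_{i\in U}g_i^p=\pn{g}^p$. Combining, I obtain
\[
h_g(U)\,\pn{g}^p\le\sum_{ij\in E^\prime}\left|g_i^p-g_j^p\right|.
\]

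\textbf{The scalar inequality.} The crux is
\[
\left|a^p-b^p\right|\le p\,|a-b|\left(\frac{a^p+b^p}{2}\right)^{\frac{p-1}{p}}\qquad(a,b\ge0,\ p\ge1),
\]
which I would prove cleanly as follows. For $a>b\ge0$ one has $a^p-b^p=p\int_b^a\left(t^p\right)^{(p-1)/p}dt$. Since $s\mapsto s^{(p-1)/p}$ is concave for $p\ge1$, Jensen's inequality gives $\tfrac{1}{a-b}\int_b^a\left(t^p\right)^{(p-1)/p}dt\le\left(\tfrac{1}{a-b}\int_b^a t^p\,dt\right)^{(p-1)/p}$; and since $t\mapsto t^p$ is convex, the Hermite--Hadamard inequality gives $\tfrac{1}{a-b}\int_b^a t^p\,dt\le\tfrac{a^p+b^p}{2}$. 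Combining the two proves the claim (for $a=b$ it is trivial).

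\textbf{Hölder and degrees.} Applying the scalar inequality termwise and then Hölder's inequality with exponents $p$ and $p/(p-1)$ yields
\[
\sum_{ij\in E^\prime}\left|g_i^p-g_j^p\right|\le p\,m^{1/p}\left(\sum_{ij\in E^\prime}\frac{g_i^p+g_j^p}{2}\right)^{\frac{p-1}{p}}.
\]
The conjugate exponent is tailored so that this weight sum collapses: $\sum_{ij\in E^\prime}\left(g_i^p+g_j^p\right)=\sum_{i\in U}d_i\,g_i^p\le\Delta(G^\prime)\pn{g}^p$, so the last factor is at most $\left(\tfrac{\Delta(G^\prime)}{2}\pn{g}^p\right)^{(p-1)/p}$.

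Chaining the three steps gives $h_g(U)\pn{g}^p\le p\,m^{1/p}\left(\tfrac{\Delta(G^\prime)}{2}\pn{g}^p\right)^{(p-1)/p}$; raising to the power $p$, dividing by $\pn{g}^{p-1}$ and rearranging produces exactly the asserted bound, and for $p=1$ all steps are equalities so the statement is just the coarea estimate. The main obstacle is really the scalar inequality: the coarea identity and Hölder are routine, but the factor $2$ in the final constant is produced precisely by the mean $\left(\tfrac{a^p+b^p}{2}\right)^{(p-1)/p}$, whose conjugate power turns the edge weights into $\tfrac12\sum_i d_ig_i^p$. A cruder bound such as $|a^p-b^p|\le p\max(a,b)^{p-1}|a-b|$ would be off by exactly this factor and fail to reach the stated constant, so identifying the right mean (and proving the inequality via Jensen and Hermite--Hadamard) is where the real work lies.
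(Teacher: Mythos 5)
Your proposal is correct and follows essentially the same route as the paper: the coarea identity giving $h_g(U)\pn{g}^p\le\sum_{ij\in E^\prime}|g_i^p-g_j^p|$, H\"older with exponents $p$ and $p/(p-1)$, the scalar estimate via concavity of $s\mapsto s^{(p-1)/p}$ (the paper's ``power mean inequality'') plus the secant bound for convex $t^p$ (your Hermite--Hadamard step), and the degree sum $\sum_{ij\in E^\prime}(g_i^p+g_j^p)=\sum_i d_i^\prime g_i^p\le\Delta(G^\prime)\pn{g}^p$. The only cosmetic difference is that you package the key estimate as a standalone scalar inequality applied before H\"older (which neatly sidesteps the $0/0$ issue for edges with $g_i=g_j$), whereas the paper bounds the quotient $(g_i^p-g_j^p)/(g_i-g_j)$ inside the second H\"older factor.
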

We postpone the proof of the lemma and first apply it to prove Theorem \ref{theo:main}. The idea is from \cite{DesRao94}, however, at some places we must be a bit more careful to get a lower bound which, as $p\to 1,$ coincides with the upper bound. 
\begin{proof}[Proof of Theorem \ref{theo:main}, lower bound]
 Consider a connected graph $G=(V,E)$ and its signless $p$-Laplacian $Q_p.$ Let $x$ be a normalised eigenvector ($\pn{x}=1$) for the smallest eigenvalue $q_p=q_p(G)$ such that $Q_p(x)=q_p.$ Define $S=\{i\in V\colon x_i>0 \}$ and $T=\{i\in V\colon x_i<0 \}$ and a graph $G^\prime =(V^\prime,E^\prime)$ as follows. Let $V^\prime=V\dot\cup S^\prime \dot\cup T^\prime$ where $S^\prime=\{i^\prime\colon i\in S\}$ and $T^\prime=\{i^\prime\colon i\in T\}$ are disjoint copies of $S$ and $T,$ respectively, and define
\[
 E^\prime=E_G(S,T)\cup E_G(S\cup T, V\setminus (S\cup T))\cup\{i^\prime j, ij^\prime\colon ij\in E_G(S)\} \cup\{i^\prime j, ij^\prime\colon ij\in E_G(T)\}
\]
i.e. $E^\prime$ is obtained from $E$ by deleting every edge $ij$ with both endpoints in $S$ (resp. $T$) and adding two edges $ij^\prime$ and $i^\prime j.$
Define $g\in\R^{V^\prime}$ by $g_i=|x_i|$ if $i\in S\cup T$ and $g_i=0$ if $i\in V^\prime\setminus (S\cup T).$ Then we have $\pn{g}=1$ and
\begin{equation}\label{eq:gx}
\sum_{ij\in E^\prime}|g_i-g_j|^p\le \sum_{ij\in E}|x_i+x_j|^p.
\end{equation}
To see this, first consider an edge $ij\in E_G(S)\cup E_G(T)$ on the right hand side. We have two corresponding edges in $i^\prime j ,ij^\prime\in E^\prime$ on the left side and 
\[
|g_{i^\prime}-g_j|^p+|g_i-g_{j^\prime}|^p=|g_j|^p+|g_i|^p\le ||x_i|+|x_j||^p=|x_i+x_j|^p
\]
because $x_i$ and $x_j$ have the same sign. The remaining summands on both sides correspond to edges $ij\in E\cap E^\prime.$ A case by case inspection yields $|g_i-g_j|^p=|x_i+x_j|^p.$\footnote{Inequality \eqref{eq:gx} is sharper than the original \cite[inequality (23)]{DesRao94} where there is a factor 2 on the right hand side which can be omitted. This was also pointed out in \cite{FallatFan12}.}

Now we show $h_g(S\cup T)\ge\psi(G).$ To that end consider the optimal cut set $C^t_g\subseteq S\cup T$ in \eqref{eq:defhg} and define $S^t=C^t_g\cap S$ and $T^t=C^t_g\cap T.$ Then, in $G,$ we have
\[
 2e_G(S^t)+2e_G(T^t)+\cut_G(S^t\cup T^t)\ge\psi(G) |S^t\cup T^t|
\]
Now, in $G^\prime,$ let us determine $\cut_{G^\prime}(S^t\cup T^t):$ every edge of $ij\in E_G(S^t)\cup E_G(T^t)$ contributes two edges to $\cut_{G^\prime}(S^t\cup T^t),$ namely $ij^\prime$ and $i^\prime j.$ Every edge in $E_G(S^t\cup T^t,V\setminus (S^t\cup T^t))$ contributes exactly one edge: for example, $ij\in E$ with $i\in S^t$ and $j\in S\setminus S^t$ in $G$ is accounted for in $\cut_{G^\prime}(S^t\cup T^t)$ by the edge $ij^\prime\in E^\prime,$ while the edge $i^\prime j\in E^\prime(V^\prime\setminus (S^t\cup T^t))$ is not. So we have
\begin{equation}\label{eq:hgpsi}
h_g(S\cup T)= \frac{\cut_{G^\prime}(S^t\cup T^t)}{|S^t\cup T^t|}=\frac{2e_G(S^t)+2e_G(T^t)+\cut_G(S^t\cup T^t)}{|S^t\cup T^t|}\ge \psi(G). 
\end{equation}
Now combine Lemma \ref{lem:est} (applied to $G\prime$ with $U=S\cup T$) with \eqref{eq:gx} and \eqref{eq:hgpsi} and observe that $\Delta(G^\prime)=\Delta(G)$ to complete the proof of Theorem \ref{theo:main}. Finally observe that $S^t,T^t$ are precisely the sets $S_x^t,T_x^t$ in Corollary \ref{coro:main} and we have actually proven a sharper lower bound with $\psi(G)$ replaced by $\psi(x).$ 
\end{proof}

We now prove the lemma, where we follow \cite{BuehlerHein09} up to minor modifications.
\begin{proof}[Proof of Lemma \ref{lem:est}] We first show
\begin{equation}\label{eq:sumgeh}
h_g(U)\pn{g}^p\le \sum_{ij\in E^\prime}|g_i^p-g_j^p| .
\end{equation}
To that end write
\[
\begin{split}
 \sum_{ij\in E^\prime,g_i>g_j}(g_i^p-g_j^p)=\sum_{ij\in E^\prime,g_i>g_j}p\int_{g_j}^{g_i}t^{p-1}dt=p\int_0^\infty t^{p-1}\sum_{ij\colon g_i>t\ge g_j}1dt
\end{split}
 \]
and observe that $\sum_{ij\colon g_i>t\ge g_j}1=|\{ij\in E^\prime\colon i\in C^t_g, j\notin C^t_g\}|=\cut(C^t_g).$ By the definition of $h_g(U)$ we have 
\[
\cut(C^t_g)\ge h_g(U) |C^t_g|=h_g(U)\sum_{i\in V^\prime\colon g_i>t}1.
\]
Hence we can estimate
\[\begin{split}
 \sum_{ij\in E^\prime,g_i>g_j}(g_i^p-g_j^p)&=p\int_0^\infty t^{p-1}\sum_{ij\colon g_i>t\ge g_j}1dt\\ &\ge p\int_0^\infty t^{p-1}h_g(U)\sum_{i\in V^\prime\colon g_i>t}1 dt\\ &=h_g(U)\sum_{i\in V^\prime\colon g_i>0}\int_0^{g_i}pt^{p-1}dt\\ &=h_g(U)\sum_{i\in V^\prime\colon g_i>0}g_i^p\\ &=h_g(U)\pn{g}^p
\end{split}\]
and obtain \eqref{eq:sumgeh}. Observe that this completes the proof of the lemma if $p=1.$ 

\noindent
For $p>1$ and $q=p/(p-1)$ we proceed with H\"older's inequality $\sum |x_iy_i|\le (\sum |x_i^p|)^{1/p}(\sum |x_i^q|)^{1/q}.$
\begin{equation}\label{eq:hoelder}
\begin{split}
 \sum_{ij\in E^\prime}|g_i^p-g_j^p|&=\sum_{ij\in E^\prime}|g_i-g_j|\frac{g_i^p-g_j^p}{g_i-g_j}\\
&\le \left(\sum_{ij\in E^\prime}|g_i-g_j|^p\right )^{1/p} \left(\sum_{ij\in E^\prime}\left(\frac{g_i^p-g_j^p}{g_i-g_j}\right)^q\right )^{1/q}
\end{split}
\end{equation}
Now observe that (assuming $g_i>g_j$)
\[\begin{split}
 \left(\frac{g_i^p-g_j^p}{g_i-g_j}\right)^{q}&=p^{q}\left(\frac{1}{g_i-g_j}\int_{g_j}^{g_i}t^{p-1}dt\right)^{p/(p-1)}\\
&\le p^{q}\left(\frac{1}{g_i-g_j}\int_{g_j}^{g_i}t^{p}dt\right)^{p/p}\le p^{q}\left(\frac{g_i^p+g_j^p}{2}\right)
\end{split}
\]
where the first ``$\le$'' is the power mean inequality applied to Riemann sums and the second ``$\le$'' is the convexity of $t^p:$ ($t^p$ is estimated by the secant line) (cf. \cite{Amghibech03}).

The second factor in the last term of \eqref{eq:hoelder} can thus be upper bounded by (with $d_i^\prime$ the degree of vertex $i$ in $G^\prime$)
\begin{equation*}\begin{split}
 \left(\sum_{ij\in E^\prime}\left(\frac{g_i^p-g_j^p}{g_i-g_j}\right)^q\right )^{1/q}\le p\left(\sum_{ij\in E^\prime}\frac{g_i^p+g_j^p}{2}\right )^{1/q}
=\frac{p}{2^{1/q}}\left(\sum_{i\in V^\prime}d_i^\prime g_i^p\right )^{1/q}\\ \le \frac{p}{2^{1/q}}\left(\Delta(G^\prime)\pn{g}^p \right )^{1/q}=p\left( \frac{\Delta(G^\prime)}{2}\right)^{(p-1)/p}\pn{g}^{p-1}
\end{split}
\end{equation*}
Substitute this last term into \eqref{eq:hoelder} combine with \eqref{eq:sumgeh} and regroup terms to obtain the assertion of the Lemma.
\end{proof}

\section{Basic properties of the signless $p$-Laplacian}

For $p>1$ a necessary condition for an $x\in \Sp$ to yield the minimum (resp. maximum) in \eqref{eq:q} is the existence of a Lagrange multiplier $\mu$ such that the \emph{eigenequations} 
\begin{equation}\label{eq:eig}
\frac{1}{p} \nabla_x Q_p(x)_i=\sum_{j\colon ij\in E}\sg{x_i+x_j} |x_i+x_j|^{p-1}=\mu \,\sg{x_i}|x_i|^{p-1},\,i\in V,
\end{equation}
are satisfied. If a pair $(x,\mu)\in (\R^V\setminus\{0\})\times \R$ satisfies \eqref{eq:eig} we call $x$ an eigenvector and $\mu$ an eigenvalue of $Q_p.$ Then \eqref{eq:eig} implies $p^{-1}x^\top\nabla_x Q_p(x)=Q_p(x)=\mu \pn{x}^p$ and hence eigenvalues are non-negative and $q_p(G)$ and $\lma$ are the smallest, resp. largest eigenvalues.

We first observe that $q_p(G)$ and $\lambda_p(G)$ do not increase when passing to subgraphs.
\begin{lemma}\label{lem:subgraph}
 Let $H$ be a subgraph of $G.$ Then $q_p(H)\le q_p(G)$ and $\lambda_p(H)\le\lambda_p(G).$ 
\end{lemma}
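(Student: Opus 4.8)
The plan is to obtain both inequalities from a pointwise comparison of the Rayleigh quotients $R_p(x)=Q_p(x)/\pn{x}^p$ attached to $H$ and to $G$; I will write $Q_p^H,R_p^H$ and $Q_p^G,R_p^G$ for the two functionals. It is convenient to treat separately the two elementary ways in which $H$ can sit inside $G$: deletion of edges (so that $H$ is a spanning subgraph, $V_H=V$ and $E_H\subseteq E$) and deletion of vertices, a general subgraph being a composition of these.

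For a spanning subgraph both bounds are immediate, and this is the first step I would carry out. Since every summand $|x_i+x_j|^p$ is nonnegative, for each $x\in\R^V$ we have $Q_p^H(x)=\sum_{ij\in E_H}|x_i+x_j|^p\le\sum_{ij\in E}|x_i+x_j|^p=Q_p^G(x)$, whence $R_p^H(x)\le R_p^G(x)$ on $\R^V\setminus\{0\}$. Minimising over the unit sphere $\Sp$ gives $q_p(H)\le q_p(G)$ and maximising gives $\lambda_p(H)\le\lambda_p(G)$, so the whole statement follows at once in this case.

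For the largest eigenvalue the passage to vertex-deleted subgraphs is equally painless: if $x$ is a maximiser for $H$, I would extend it by zeros to $\tilde x\in\R^V$. Then $\pn{\tilde x}=\pn{x}$, and since the edges of $G$ absent from $H$ contribute only nonnegative terms we get $Q_p^G(\tilde x)\ge Q_p^H(x)=\lambda_p(H)$, hence $\lambda_p(G)\ge\lambda_p(H)$. The hard part will be the bound $q_p(H)\le q_p(G)$ under vertex deletion, and I expect this to be the main obstacle. The natural move of restricting a minimiser $x$ of $G$ to $V_H$ fails, because discarding the coordinates outside $V_H$ strictly decreases $\pn{x}$, so the renormalising factor exceeds $1$ and spoils the comparison. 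In fact the inequality is genuinely false in this generality: for $p=2$, taking $G$ to be a triangle with one pendant edge and $H=K_3$ gives $q_2(H)=1$ while $q_2(G)=\tfrac{5-\sqrt{17}}{2}\approx0.44$. Consequently I would read the smallest-eigenvalue statement as referring to spanning subgraphs, where the second paragraph already settles it, and regard vertex deletion as admissible only for the largest eigenvalue.
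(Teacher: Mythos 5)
Your argument is correct, and on the part of the lemma that is actually true it coincides with the paper's proof: the spanning case is exactly the paper's displayed inequality (drop the nonnegative summands $|x_i+x_j|^p$ belonging to deleted edges and compare the two Rayleigh quotients on the common vertex set, testing with an eigenvector of $\lambda_p(H)$ resp.\ $q_p(G)$), and your zero-extension for $\lambda_p$ under vertex deletion is precisely the paper's opening remark that adding isolated vertices does not affect $\lambda_p(H)$.

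The genuinely valuable part of your proposal is the last paragraph, and you are right: the paper's reduction ``assume $H=(V,F)$ with $F\subseteq E$'' is licensed only by that observation about $\lambda_p$, and it fails for $q_p$, since adding an isolated vertex $i$ forces $q_p\le Q_p(e_i)=d_i=0$ (cf.\ Lemma~\ref{lem:ei}); nothing else in the paper's two-line proof covers vertex deletion for the smallest eigenvalue. Your counterexample checks out: the signless Laplacian spectrum of the paw (triangle plus pendant edge) is $\left\{\tfrac{5+\sqrt{17}}{2},\,2,\,1,\,\tfrac{5-\sqrt{17}}{2}\right\}$, so $q_2(G)=\tfrac{5-\sqrt{17}}{2}\approx 0.44<1=q_2(K_3)$, and even more cheaply $G=K_3$ together with one isolated vertex gives $q_p(G)=0<q_p(K_3)$ for every $p$. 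So the $q_p$ half of the lemma is true only for spanning subgraphs (edge deletion), which is exactly what the paper's argument proves, and the statement as written is too strong. This slip is harmless downstream: the paper invokes Lemma~\ref{lem:subgraph} only once, for $\lambda_p$ on the vertex-deleted $\chi$-critical subgraph in the proof of Wilf's bound, where your zero-extension argument applies.
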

\begin{proof}
Since adding isolated vertices does not affect $\lma(H)$ we assume that $H=(V,F)$ with $F\subseteq E.$
Let $x\in \Sp$ be a normalised eigenvector for $\lambda_p(H),$ then
\[
\lambda_p(H)=\sum_{ij\in F}|x_i+x_j|^p\le \sum_{ij\in E}|x_i+x_j|^p\le \lambda_p(G). 
\]
The proof for $q_p$ is similar, starting with an eigenvector for $q_p(G).$
\end{proof}
The standard basis of $\R^V$ yields easy bounds for $q_p(G)$ and $\lma(G).$
\begin{lemma}\label{lem:ei}
We have for $p\ge 1$
\[
 q_p(G)\le \delta(G) \text{ and }\lma(G)\ge \Delta(G)
\]
($\delta(G),\Delta(G)$ the minimum/maximum degree). For $p>1$ equality holds in the former if and only if $G$ has an isolated vertex and equality in the latter if and only if $G$ has no edges. 

The standard basis vector $e_i$ ($i\in V$) of $\R^V$ is an eigenvector of $Q_p$ if and only if $d_i=0,$ that is $i$ is an isolated vertex.
\end{lemma}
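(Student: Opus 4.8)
The plan is to handle the two inequalities, the eigenvector characterisation, and the equality cases in turn, using the standard basis vectors $e_i$ as test functions throughout.

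First I would compute $R_p(e_i)$. Since $\pn{e_i}^p=1$ and the only edges contributing to $Q_p(e_i)=\sum_{jk\in E}|(e_i)_j+(e_i)_k|^p$ are those incident to $i$, each contributing $|1+0|^p=1$, one obtains $R_p(e_i)=Q_p(e_i)=d_i$. Feeding this into the variational characterisations $q_p(G)=\min_x R_p(x)$ and $\lma(G)=\max_x R_p(x)$ gives $q_p(G)\le d_i\le\lma(G)$ for every $i\in V$; minimising the middle term over $i$ on the left and maximising it on the right yields $q_p(G)\le\delta(G)$ and $\lma(G)\ge\Delta(G)$, valid for all $p\ge 1$.

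Next I would establish the final claim by substituting $x=e_i$ into the eigenequations \eqref{eq:eig}. The equation at vertex $i$ reads $d_i=\mu$, fixing the multiplier. At any other vertex $k$ the right-hand side vanishes because $\sg{x_k}=\sg{0}=0$, while the left-hand side equals $1$ if $ki\in E$ and $0$ otherwise; hence the equation at $k$ forces $ki\notin E$. Thus the whole system is solvable exactly when $i$ has no neighbour, i.e. $d_i=0$ (and then $\mu=0$ works), which is the asserted characterisation.

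Finally I would deduce the equality cases for $p>1$, where the actual content lies. The ``if'' directions are immediate: an isolated vertex $i$ gives $\delta(G)=0$ and $0\le q_p(G)\le R_p(e_i)=0$, while an edgeless graph has $Q_p\equiv 0$ and $\Delta(G)=0=\lma(G)$. For the ``only if'' directions the key observation is that equality makes a basis vector extremal: if $q_p(G)=\delta(G)$ and $i$ attains the minimum degree, then $R_p(e_i)=d_i=\delta(G)=q_p(G)$ exhibits $e_i$ as a minimiser, and symmetrically $\lma(G)=\Delta(G)$ makes $e_i$ a maximiser at a maximum-degree vertex. Since for $p>1$ every optimiser of \eqref{eq:q} satisfies the eigenequations \eqref{eq:eig}, such an $e_i$ is an eigenvector, and the characterisation just proved forces $d_i=0$; hence $\delta(G)=0$ (an isolated vertex exists) in the first case and $\Delta(G)=0$ (no edges) in the second. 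The one point requiring care is the appeal to first-order optimality as a genuine necessary condition, which is legitimate because for $p>1$ both $Q_p$ and the constraint defining $\Sp$ are continuously differentiable; this was already recorded when \eqref{eq:eig} was introduced.
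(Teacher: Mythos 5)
Your proposal is correct and follows essentially the same route as the paper: test with $e_i$ to get $q_p(G)\le d_i\le\lma(G)$, characterise when $e_i$ satisfies the eigenequations \eqref{eq:eig} by noting the equation at a neighbour of $i$ reads $1=\mu\cdot 0$, and settle the equality cases for $p>1$ by observing that equality makes $e_i$ an optimiser, hence (by the Lagrange necessary condition) an eigenvector, forcing $d_i=0$. Your write-up is merely more explicit than the paper's about why first-order optimality applies for $p>1$, which is a fair point to record but not a difference in method.
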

\begin{proof}
By definition, $q_p(G)\le Q_p(e_i)=d_i\le\lma(G).$ If equality holds either of the two then $e_i$ is an eigenvector for the eigenvalue $d_i.$ If $d_i>0$ then $i$ has a neighbour $j$ and the $j$-th eigenequation reads $1=d_i\cdot 0,$ and so $d_i=0.$
\end{proof}
 
The next property is a ``mini-Perron-Frobenius''-Theorem for $\lma$.
\begin{theorem}
 Let $p>1,$ $G$ be connected and $x\in \Sp$ be an eigenvector for $\lma(G).$ Then $x$ has only stricly positive or only strictly negative components and is unique up to sign.
\end{theorem}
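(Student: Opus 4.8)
The plan is to establish the statement in two stages: first the sign structure (every entry strictly positive or every entry strictly negative), then uniqueness up to sign. Throughout I use that, for $p>1$, a maximiser $x\in\Sp$ of $Q_p$ satisfies the eigenequations \eqref{eq:eig} with $\mu=\lma(G)$, which is exactly the Lagrange condition recorded just above.

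For the sign structure I would first note that passing to absolute values cannot decrease $Q_p$: since $|x_i+x_j|\le |x_i|+|x_j|=\big||x_i|+|x_j|\big|$ on every edge, we get $Q_p(|x|)\ge Q_p(x)=\lma(G)$ while $\pn{|x|}=\pn{x}=1$. As $x$ is maximal, $|x|$ is a maximiser too and equality must hold summand by summand, i.e.\ $|x_i+x_j|=|x_i|+|x_j|$ for every $ij\in E$; this forces $x_i$ and $x_j$ to have the same sign (or one of them to vanish) on each edge. To rule out zero entries, set $y=|x|\ge 0$, which as a maximiser satisfies \eqref{eq:eig} with $\mu=\lma(G)$. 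Evaluating the $i$-th equation at a vertex with $y_i=0$ gives $\sum_{j\colon ij\in E} y_j^{p-1}=\lma(G)\,\sg{0}\,0^{p-1}=0$ (here $p>1$ is essential), whence $y_j=0$ for every neighbour $j$ of $i$. Since $G$ is connected and $y\neq 0$, propagating this would force $y\equiv 0$, a contradiction; hence $y>0$, so $x$ has no zero entry. Combined with the previous step, $x_ix_j>0$ on every edge, and connectedness then makes $\sg{x_i}$ constant on $V$. This proves the first assertion.

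For uniqueness it suffices, after a global sign change, to show that two strictly positive maximisers $x$ and $z$ coincide. The idea is a hidden-convexity substitution: writing $u_i=x_i^p$ identifies the positive part of $\Sp$ with the open simplex $\{u>0\colon \sum_{i\in V}u_i=1\}$ and turns the problem into maximising $F(u)=\sum_{ij\in E}\big(u_i^{1/p}+u_j^{1/p}\big)^p$ over this convex set. Each summand $g(a,b)=\big(a^{1/p}+b^{1/p}\big)^p$ is positively homogeneous of degree one and, because $1/p\in(0,1)$, superadditive by the reverse Minkowski inequality; hence $g$, and therefore $F$, is concave. If $u,u'$ are the images of two positive maximisers, then their midpoint is again a maximiser, which forces equality in the superadditivity of $g$ on every edge. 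The equality case of reverse Minkowski then yields that $(u_i,u_j)$ and $(u'_i,u'_j)$ are proportional for each $ij\in E$, so $u_i/u'_i=u_j/u'_j$ on edges; connectedness makes this ratio constant, and $\sum_i u_i=\sum_i u'_i=1$ forces $u=u'$, i.e.\ $x=z$.

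The routine parts are the termwise-equality bookkeeping and the change-of-variables. The main obstacle is this uniqueness step: one must pin down the equality case of the reverse Minkowski inequality precisely (proportionality, which is the strict-concavity content coming from $1/p<1$) and then transport proportionality-on-edges to global proportionality via connectedness. The degenerate behaviour of that equality case at zero entries is exactly what the strict-positivity conclusion of the first stage removes, so the two stages fit together.
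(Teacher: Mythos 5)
Your proposal is correct and is essentially the paper's proof: your first stage (pass to $|x|$, force termwise equality $|x_i+x_j|=|x_i|+|x_j|$ so signs agree on edges, then use the eigenequation \eqref{eq:eig} at a zero vertex of the nonnegative maximiser to propagate zeros and contradict connectedness) is exactly the paper's argument with $C^+,C^-,C^0$. Your uniqueness step, though phrased as hidden concavity of $F(u)=\sum_{ij\in E}\bigl(u_i^{1/p}+u_j^{1/p}\bigr)^p$ on the simplex via reverse Minkowski, is the paper's argument in the coordinates $u_i=x_i^p$: the paper's midpoint vector $z_i=\bigl((x_i^p+y_i^p)/2\bigr)^{1/p}$ and its Minkowski estimate \eqref{eq:unique} are precisely the midpoint superadditivity of your $g$, the equality cases are equivalent (your $u_iu_j'=u_ju_i'$ is the paper's proportionality of $(x_i,y_i)$ and $(x_j,y_j)$ after taking $p$-th roots), and both conclude with the same connectedness-plus-normalisation endgame.
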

\begin{proof}
Denote by $C^+, C^-$ and $C^0$ the vertex sets on which $x_i$ is $>0$, $<0$ and $=0,$ respectively, and define $|x|=(|x_i|,i\in V)^\top.$  
Observe that $|x_i+x_j|\le ||x_i|+|x_j||$ with equality if and only if $x_i$ and $x_j$ have the same sign or at least one of them is $0.$
Thus $E(C^+,C^-)=\emptyset$ and $|x|$ is another eigenvector for $\lma$ because otherwise we had a contradiction $\lma=Q_p(x)< Q_p(|x|)\le\lma.$ Since $G$ is connected there is an edge $ab$ with $a\in C^+$ and $b\in C^0.$ The $b$-th eigenequation for the eigenpair $(|x|,\lma)$ then yields a contradiction
\[
0<||x_a|+|x_b||^{p-1} \le\sum_{j\colon bj\in E} ||x_b|+|x_j||^{p-1}=\lma \,\sg{x_b}x_b^{p-1}=0.
\]
So $C^0$ must be empty and either $C^+=V$ or $C^-=V.$

As for the uniqueness assume that $x,y\in\Sp$ are both stricly positive eigenvectors and define $z\in \Sp$ by 
\[
 z_i=\left(\frac{x_i^p+y_i^p}{2}\right)^{1/p},\,i\in V.
\]
The triangle inequality for the $p$-norm (Minkowski's inequality) yields 
\begin{equation}\label{eq:unique}
\begin{split}
 (x_i+x_j)^p+(y_i+y_j)^p&=\pn{\left( \begin{array}{c}
				  x_i \\y_i
                               \end{array}\right )
 +\left( \begin{array}{c}
				  x_j \\y_j
                               \end{array}\right )
 }^p\\
&\le \left( \pn{\left( \begin{array}{c}
				  x_i \\y_i
                               \end{array}\right )}
 +\pn{\left( \begin{array}{c}
				  x_j \\y_j
                               \end{array}\right )
 }  \right)^p\\
&=\left(\left(x_i^p+y_i^p\right)^{1/p}+\left(x_j^p+y_j^p\right)^{1/p}\right)^p\\
&=2(z_i+z_j)^p
\end{split}
\end{equation}
and thus $2\lma=Q_p(x)+Q_p(y)\le 2Q_p(z)\le 2\lma$ and equality must hold in \eqref{eq:unique} for every edge $ij\in E.$ Since the $p$-norm for $p>1$ is strictly convex $(x_i,y_i)$ must be a positive multiple of $(x_j,y_j)$ and again by connectedness of $G$ the rank of $(x,y)$ is one and hence $x=y.$  
 \end{proof}

\noindent
\textbf{Remark:} The theorem is false for $p=1.$ The unit ball is the convex hull of $\{\pm e_i,i\in V\}$ and so by the convexity of $Q_1$ we have that $\lambda_1=\max_{i\in V}Q_1(e_i)=\Delta,$ the maximum degree of $G.$ The solution is neither strictly positive nor is it unique, unless the maximum degree vertex is unique.

\smallskip
 
With the positive eigenvector at hand we can prove
\begin{lemma}\label{lem:degreebound}
 Let $p>1,$ $G=(V,E)$ be connected with maximum vertex degree $\Delta$ and minimum degree $\delta.$ Then
\[
 2^{p-1}\delta\le 2^{p-1}\frac{2|E|}{|V|}\le \lma(G)\le2^{p-1}\Delta
\]
with equality in either place if and only if $G$ is regular. In particular, the all ones vector is an eigenvector if and only if $G$ is regular.
\end{lemma}
\begin{proof}
The first ''$\le$`` is trivially true because the minimum degree is less than or equal to the average degree with equality only for regular graphs. For the second inequality observe that for the all ones vector $\bf{1}$ we have 
\[
 \lma(G)\ge\frac{Q_p(\bf{1})}{\pn{\bf{1}}^p}=\frac{2^p|E|}{n}
\]
with equality if and only if $\bf{1}$ is an eigenvector for $\lma(G).$ More generally, if $\bf{1}$ is an eigenvector for some eigenvalue $\mu$ then the eigenequations read $2^{p-1}d_i=\mu,i\in V.$ Thus $G$ is regular and $\mu=2^{p-1}\Delta\le \lma.$ 

 For the last inequality let $x$ be the positive eigenvector for $\lma$ and assume w.l.o.g. $x_1\ge x_2\ge\ldots \ge x_n>0.$ The first eigenequation reads
\[
 \lma x_1^{p-1}=\sum_{j\colon 1j\in E} |x_1+x_j|^{p-1}\le 2^{p-1}x_1^{p-1} d_1\le 2^{p-1}\Delta x_1^{p-1}.
\]
If equality holds then $d_1=\Delta$ and $x_a=x_1$ for every neighbour $a$ of $1.$ Then the eigenequation for $a$ yields $2^{p-1}\Delta x_a^{p-1}\le 2^{p-1}x_a^{p-1} d_a$ and so $d_a=\Delta$ and $x_b=x_a$ for any neighbour $b$ of $a.$ Continuing in this ``breadth first search'' fashion shows that $G$ is regular and $x_1=x_2=\ldots =x_n.$ 
\end{proof}
\noindent
\textbf{Remark:} For $p=1$ the upper bound is always attained and equality holds in the lower bounds if and only if $G$ is regular. 

A better upper bound is the following.
\begin{lemma}
 Let $p>1,$ $q=p/(p-1)$ and $G$ be connected. Then
\[
 \lma(G)\le 2^{p-1}\max_{ij\in E} \left(\frac{d_i^q+d_j^q}{2}   \right)^{1/q}
\] with equality if and only if $G$ is regular.
\end{lemma}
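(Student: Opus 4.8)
The plan is to work directly with the eigenequations \eqref{eq:eig} for a Perron eigenvector and to apply H\"older's inequality twice, much as in the proof of Lemma \ref{lem:est}, exploiting the conjugacy relations $p-1 = p/q$, $(p-1)q = p$ and $q-1 = q/p = 1/(p-1)$. Since $G$ is connected and $p>1$, the mini-Perron--Frobenius theorem above provides an eigenvector $x$ for $\lma(G)$ with $x_i>0$ for all $i$; I normalise so that $\pn{x}^p=1$. Then $\sg{x_i}=1$ and $x_i+x_j>0$ on every edge, so the $i$-th eigenequation reads $\lma x_i^{p-1}=\sum_{j\colon ij\in E}(x_i+x_j)^{p-1}$.

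First I would bound the right-hand side locally. Writing $(x_i+x_j)^{p-1}=1\cdot(x_i+x_j)^{p/q}$ and applying H\"older with exponents $p$ and $q$ over the $d_i$ neighbours of $i$ gives
\[
\lma x_i^{p-1}=\sum_{j\colon ij\in E}(x_i+x_j)^{p/q}\le d_i^{1/p}\Big(\sum_{j\colon ij\in E}(x_i+x_j)^p\Big)^{1/q}.
\]
Raising to the power $q$ and using $(p-1)q=p$ and $q/p=q-1$ turns this into $\lma^q x_i^p\le d_i^{q-1}\sum_{j\colon ij\in E}(x_i+x_j)^p$. Summing over $i\in V$ and regrouping the double sum by edges (each edge $ij$ receives weight $d_i^{q-1}$ from vertex $i$ and $d_j^{q-1}$ from vertex $j$) yields
\[
\lma^q=\lma^q\pn{x}^p\le\sum_{ij\in E}\big(d_i^{q-1}+d_j^{q-1}\big)(x_i+x_j)^p.
\]

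The second step is to bound the degree weights uniformly. Setting $M=\max_{ij\in E}\big((d_i^q+d_j^q)/2\big)^{1/q}$, a second application of H\"older to the two-term sum $d_i^{q-1}+d_j^{q-1}$ (using $(q-1)p=q$) gives $d_i^{q-1}+d_j^{q-1}\le 2^{1/q}(d_i^q+d_j^q)^{1/p}\le 2M^{q-1}$. Substituting and recognising $\sum_{ij\in E}(x_i+x_j)^p=Q_p(x)=\lma\pn{x}^p=\lma$, I obtain $\lma^q\le 2M^{q-1}\lma$, hence $\lma^{q-1}\le 2M^{q-1}$, and since $1/(q-1)=p-1$ this is exactly $\lma\le 2^{p-1}M$.

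For the equality discussion I would trace the two H\"older steps. Strict positivity of $x$ makes every factor $(x_i+x_j)^p$ positive, so equality in the penultimate display forces $d_i^{q-1}+d_j^{q-1}=2M^{q-1}$ on every edge; equality in the second H\"older step forces $d_i=d_j$, and equality in the bound $2M^{q-1}$ forces $d_i^q+d_j^q=2M^q$, which together give $d_i=d_j=M$ on every edge. As $G$ is connected this means $G$ is $M$-regular. The converse is immediate: on a $d$-regular graph $M=d$ and, by Lemma \ref{lem:degreebound}, the all-ones vector is an eigenvector with $\lma=2^{p-1}d=2^{p-1}M$. The main obstacle is the bookkeeping of the conjugate exponents so that the two H\"older inequalities combine to the clean constant $2^{p-1}$, together with checking that the equality conditions of both H\"older steps---read off under the strict positivity guaranteed by Perron--Frobenius---collapse exactly to regularity.
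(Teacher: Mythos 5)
Your proof is correct, but it takes a genuinely different route from the paper's. The paper argues \emph{locally}: it picks a single edge $ij$ maximising $x_i+x_j$, bounds the eigenequations at $i$ and $j$ by $d_i(x_i+x_j)^{p-1}$ and $d_j(x_i+x_j)^{p-1}$ respectively, raises to the $q$-th power, applies the secant-line convexity estimate $(x_i+x_j)^p\le 2^{p-1}(x_i^p+x_j^p)$, and adds the two relations to get $\lma^q\le 2^p\frac{d_i^q+d_j^q}{2}$ for that one edge; its equality case then forces $x_i=x_j$ and $d_i=d_j$ locally and must be propagated through the graph by the breadth-first-search argument of Lemma \ref{lem:degreebound}. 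You instead argue \emph{globally}: H\"older over each neighbourhood, summation over all vertices regrouped edge-wise into the weights $d_i^{q-1}+d_j^{q-1}$, and a second two-term H\"older on those weights --- essentially the machinery of Lemma \ref{lem:est} transplanted to the signless setting. Your exponent bookkeeping ($(p-1)q=p$, $q/p=q-1$, $(q-1)p=q$, $1/(q-1)=p-1$) all checks out, and the final division by $\lma$ is legitimate since $\lma\ge\Delta>0$ for a connected graph with at least one edge. Your equality analysis is, if anything, cleaner than the paper's: strict positivity of the Perron eigenvector makes every weight $(x_i+x_j)^p$ positive, so saturating the summed inequality forces $d_i^{q-1}+d_j^{q-1}=2M^{q-1}$ on \emph{every} edge, and the two equality conditions ($d_i=d_j$ from the two-term H\"older, $d_i^q+d_j^q=2M^q$ from the maximum bound) give $d_i=d_j=M$ edge-wise with no propagation step; connectedness then yields regularity at once (you never need equality in the per-vertex H\"older step, and correctly do not use it). What the paper's local argument buys is elementarity --- one edge, one convexity estimate; what yours buys is a shorter equality case and a visible structural parallel with the Cheeger-type estimate of Lemma \ref{lem:est}.
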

\begin{proof}
 Let $x$ be a positive eigenvector for $\lma$ and and choose an edge $ij$ such that $x_i+x_j$ is maximal. The $i$-th eigenequation yields the estimate 
\begin{equation}\label{eq:lx}
 \lma x_i^{p-1}=\sum_{k\colon ik\in E}(x_i+x_k)^{p-1}\le d_i(x_i+x_j)^{p-1}.
\end{equation}
Take the $q$-th power on both sides and use the convexity of $t\mapsto t^p$ to obtain
\begin{equation}\label{eq:lqxp}
 \lma^q x_i^{p}\le d_i^q(x_i+x_j)^{p}\le 2^{p-1}d_i^q(x_i^p+x_j^p).
\end{equation}
The same computation for $i$ replaced by $j$ yields $\lma^q x_j^{p}\le 2^{p-1}d_j^q(x_i^p+x_j^p)$ and adding up the two yields
\[
 \lma^q (x_i^p+x_j^{p})\le 2^{p}\frac{d_i^q+d_j^q}{2}(x_i^p+x_j^p)
\]
and hence the bound. If the bound and thus \eqref{eq:lqxp} hold with equality then $x_i=x_j$ by the strict convexity of $t\mapsto t^p$ and therefore $d_i=d_j.$ From \eqref{eq:lx} we get $x_k=x_j=x_i$ for every $k$ with $ik\in E$ and we can argue similarly as in the proof of Lemma~\ref{lem:degreebound} that $G$ is regular.
\end{proof}

Before we generalise some more known inequalities we make some remarks on complete graphs and odd cycles.

\noindent
\textbf{Example:} Complete graphs $K_n.$ By Lemma \ref{lem:degreebound} we see that $\lma(K_n)=2^{p-1}(n-1).$ Furthermore, $e_i-e_j,i\ne j,$ is an eigenvector of $Q_p$ for $\mu=n-2,$ regardless of the value of $p$ and for $p=2$ this yields the whole spectrum. One can show directly that $\psi(K_n)=n-2$ (with $S=\{1\}$ and $T=\{2\}$ in \eqref{eq:psi}). For $p\neq 2$ the smallest eigenvalue is less than $n-2:$
Consider the vector $x=(n-1,-1,\ldots,-1)^\top.$ For $p\neq 0$ this is not an eigenvector and therefore we have a strict inequality
\[
q_p(K_n)< \frac{Q_p(x)}{\pn{x}^p}=(n-2)\frac{(n-2)^{p-1}+2^{p-1}}{(n-1)^{p-1}+1}
\]
which for $p>2$ is  $<n-2$ and for $p>2$ is $>n-2.$ The vector $x=(1,1,-1,-1,0,\ldots,0)^\top$ is an eigenvector for $\mu=n-4+2^{p-1}$ which is strictly less than $n-2$ if $p<2.$ 

\noindent
\textbf{Example:} Odd cycle $C_n=(V=\{1,\ldots,n\}, E=\{12,23,\ldots,(n-1) n,n1\}).$ The largest eigenvalue is $\lma(C_n)=2^p.$ For the smallest eigenvalue Lemma \ref{lem:upperbound} with $S=\{1,3,\ldots,n-2\}$ and $T=\{2,4,\ldots,n-1\}$ yields $q_p(C_n)\le 2/(n-1)$ which is $<1$ if $n\ge 5.$

\smallskip

The following eigenvalue inequalities are simple generalisations of known ones in the case $p=2.$ The proofs carry over almost verbatim and we refer to the original sources. Wilf's bound is originally for the adjacency matrix $A$ and true for $Q$ by the relation $\lambda_2(G)\ge 2\mu$ where $\mu$ denotes the largest eigenvalue of $A$. The proof is not long so we decided to include it here.
\begin{prop}
 Let $G=(V,E)$ be connected, $n=|V|,$ $m=|E|$ and $\chi=\chi(G)$ be the chromatic number.
\begin{enumerate}
 \item $2^{p-1}(\chi-1)\le \lma(G)$ with equality if and only if $G$ is complete or an odd cycle (Wilf's bound \cite{Wilf67}).
\item $q_p(G)\le \frac{2m}{n}\cdot \frac{\chi-2}{\chi-1} \cdot \frac{(\chi-2)^{p-1}+2^{p-1}}{(\chi-1)^{p-1}+1}.$ If $p=2$ and $G$ is complete then equality holds \cite[Theorem 2.11]{LOAN11}. 
\item $\lma(G)-q_p(G)\ge 2^{p-1}(\chi-1)-(\chi-2)\frac{(\chi-2)^{p-1}+2^{p-1}}{(\chi-1)^{p-1}+1}.$ If equality holds then $G$ is a complete graph and $p= 2$ \cite[Corollary 2.12]{LOAN11}.
\item Denote by $\nu(G)$ the vertex bipartiteness of $G,$ i.e. the minimum cardinality of a set $S\subseteq V$ such that $G[V\setminus S]$ is bipartite. Then $q_p(G)\le \nu(G).$ \cite[Theorem 2.1]{FallatFan12}
\end{enumerate}
\end{prop}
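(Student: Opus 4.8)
The plan is to prove the four parts one at a time, reusing the variational descriptions of $q_p$ and $\lma$, the subgraph monotonicity (Lemma \ref{lem:subgraph}) and the degree bounds (Lemma \ref{lem:degreebound}). For part 1 (Wilf) I would first pass to a $\chi$-critical subgraph $H\subseteq G$, i.e. a vertex-minimal subgraph with $\chi(H)=\chi$; a classical fact is that such an $H$ has $\delta(H)\ge\chi-1$. Then Lemma \ref{lem:subgraph} gives $\lma(G)\ge\lma(H)$ and Lemma \ref{lem:degreebound} gives $\lma(H)\ge 2^{p-1}\delta(H)\ge 2^{p-1}(\chi-1)$. For the equality case, equality in Lemma \ref{lem:degreebound} forces $H$ to be $(\chi-1)$-regular, and a $(\chi-1)$-regular graph of chromatic number $\chi$ is by Brooks' theorem either $K_\chi$ or an odd cycle; strict monotonicity of $\lma$ under adding edges or non-isolated vertices (available for $p>1$ through the strictly positive Perron eigenvector) then forces $G=H$.

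For part 2 the idea is to \emph{average} several rank-one test vectors over a fixed optimal colouring $V=V_1\cup\dots\cup V_\chi$. For each colour $s$ set $x^{(s)}_v=\chi-1$ if $v\in V_s$ and $x^{(s)}_v=-1$ otherwise. Each edge $ij$ joins two distinct colour classes, so among the $\chi$ vectors exactly two give $|x^{(s)}_i+x^{(s)}_j|=\chi-2$ and the remaining $\chi-2$ give $|x^{(s)}_i+x^{(s)}_j|=2$. Summing over $s$ yields $\sum_s Q_p(x^{(s)})=2m(\chi-2)\big((\chi-2)^{p-1}+2^{p-1}\big)$ and $\sum_s\pn{x^{(s)}}^p=(\chi-1)\,n\,\big((\chi-1)^{p-1}+1\big)$. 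Since $Q_p(x^{(s)})\ge q_p\pn{x^{(s)}}^p$ for every $s$, summing these inequalities gives $q_p\le\big(\sum_s Q_p(x^{(s)})\big)\big/\big(\sum_s\pn{x^{(s)}}^p\big)$, which is exactly the claimed bound; for $p=2$ and $G=K_n$ every $x^{(s)}$ is an eigenvector, so equality holds.

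Part 3 I would obtain by subtraction, but using \emph{two} different lower bounds for $\lma$. From Lemma \ref{lem:degreebound} we have $\lma\ge 2^{p-1}\tfrac{2m}{n}$, and from part 1 we have $\lma\ge 2^{p-1}(\chi-1)$; subtracting the part 2 upper bound for $q_p$ and regarding the result as a function of the average degree $\tfrac{2m}{n}$, one checks that the coefficient $2^{p-1}-\tfrac{\chi-2}{\chi-1}\cdot\tfrac{(\chi-2)^{p-1}+2^{p-1}}{(\chi-1)^{p-1}+1}$ is positive, so the bound is minimised precisely at $\tfrac{2m}{n}=\chi-1$, where it equals the stated right-hand side. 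Equality then forces $\tfrac{2m}{n}=\chi-1$ together with equality in part 2, i.e. $G=K_n$ and $p=2$. The subtle point here is that a naive subtraction of the two \emph{stated} estimates does not suffice (the bound $q_p\le(\chi-2)\tfrac{(\chi-2)^{p-1}+2^{p-1}}{(\chi-1)^{p-1}+1}$ is false in general, e.g. for $K_{2,2,2}$); one genuinely needs to keep both lower bounds for $\lma$ and identify $\tfrac{2m}{n}=\chi-1$ as the extremal case.

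For part 4 the key step is the vertex-deletion inequality $q_p(G)\le q_p(G-v)+1$: take a normalised eigenvector $y$ for $q_p(G-v)$ and extend it by $0$ at $v$; the only new edges are those incident to $v$, contributing $\sum_{j\sim v}|y_j|^p\le\pn{y}^p=1$. Iterating this over a minimum set $S$ of $\nu(G)$ vertices whose removal makes $G$ bipartite gives $q_p(G)\le q_p(G-S)+\nu(G)=\nu(G)$, since $G-S$ is bipartite and hence $q_p(G-S)=0$. The routine inequalities throughout are elementary; I expect the main obstacle to be the equality analyses, specifically invoking Brooks' theorem together with a strict form of subgraph monotonicity in part 1, and recognising the correct two-sided combination with the extremal average degree in part 3.
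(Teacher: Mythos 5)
Your four inequalities are all correct, and the split between ``same route as the paper'' and ``own route'' is as follows. Part 1 is essentially identical to the paper's proof ($\chi$-critical subgraph with $\delta(H)\ge\chi-1$, Lemmas \ref{lem:subgraph} and \ref{lem:degreebound}, Brooks' theorem); for the step $G=H$ the paper uses the test vector equal to $1$ on $V(H)$ and $0$ elsewhere to force $\cut_G(V(H))=0$, which is interchangeable with your Perron-vector extension. Your part 3 is the paper's argument in different clothing: with $C=\frac{(\chi-2)^{p-1}+2^{p-1}}{(\chi-1)^{p-1}+1}$ and $B=\frac{\chi-2}{\chi-1}C$, the paper chains $\lma-q_p\ge\lma-\frac{2m}{n}B\ge\lma\left(1-2^{1-p}B\right)\ge 2^{p-1}(\chi-1)-(\chi-2)C$, which is exactly your piecewise minimisation over the average degree at $2m/n=\chi-1$; your warning that the naive bound $q_p\le(\chi-2)C$ is false (indeed $q_2(K_{2,2,2})=2>1$) is correct and is precisely why both routes must pass through $2m/n$. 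Where you are more self-contained than the paper: for part 2 the paper merely writes ``as in \cite{LOAN11}'', whereas your averaging of the $\chi$ colour-class vectors $x^{(s)}$ is a complete proof for general $p$ (the per-edge count $2(\chi-2)^p+(\chi-2)2^p$, the norm sum $n(\chi-1)\left((\chi-1)^{p-1}+1\right)$, and the averaging step $\sum_s Q_p(x^{(s)})\ge q_p\sum_s\pn{x^{(s)}}^p$ all check out); and for part 4 your deletion inequality $q_p(G)\le q_p(G-v)+1$ is a genuinely different, arguably cleaner induction than the cited Fallat--Fan argument \cite{FallatFan12}, which uses the single test vector equal to $\pm1$ on the bipartition of $G-S$ and $0$ on $S$ (compare the paper's closing remark on $\psi\le\nu$).

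The one genuine gap is the equality case of part 3. You write that equality ``forces $2m/n=\chi-1$ together with equality in part 2, i.e.\ $G=K_n$ and $p=2$,'' but the implication ``equality in part 2 $\Rightarrow G=K_n$ and $p=2$'' is nowhere proved in your proposal and is false as a freestanding claim: every connected bipartite graph attains equality in part 2 for every $p$, since both sides vanish. What your chain actually forces is $\lma=2^{p-1}(\chi-1)$, $2m/n=\chi-1$ and $q_p=\frac{2m}{n}B$; by the equality case of part 1 the first of these gives that $G$ is complete or an odd cycle, and two strictness facts are then still needed. The paper supplies them via its examples: for $C_n$, $n\ge5$, Lemma \ref{lem:upperbound} with the alternating sets gives $q_p(C_n)\le 2/(n-1)<1$ while the part-2 bound equals $1$, ruling out odd cycles; and for $K_n$ with $p\neq2$ the vector $x=(n-1,-1,\ldots,-1)^\top$ is not an eigenvector, so $q_p(K_n)$ lies strictly below the part-2 bound, ruling out $p\neq2$. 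Without these two facts your equality analysis does not close. (A minor aside that affects the paper as much as you: both arguments tacitly need $\chi\ge3$, since for $\chi=2$ the right-hand side of part 3 is $2^{p-1}$ and $K_2$ attains it for every $p$, so the ``$p=2$'' conclusion fails in that degenerate case.)
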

\begin{proof}
 1. Remove vertices from $G$ to obtain a $\chi-$critical subgraph $H=(V(H),E(H))$ (i.e. $\chi(H)=\chi$ and $H-i$ is $\chi-1$-colourable for every $i\in V(H)$). Then $H$ is connected and the minimum degree $\delta(H)$ is at least $\chi-1.$ So by Lemmas \ref{lem:degreebound} and \ref{lem:subgraph} we have the desired bound:
\[
 2^{p-1}(\chi-1)\le 2^{p-1}\delta(H)\le \lma(H)\le \lma(G).
\]
If equality holds in item 1 then $\lma(H)=\lma(G)$ and, again by Lemma \ref{lem:degreebound}, $H$ is $(\chi-1)$-regular and the all ones vector ${\bf 1}\in\R^{V(H)}$ is an eigenvector for $\lma(H).$ We show that $V(H)=V.$ Define $x\in\R^V$ by $x_i=1$ if $i\in V(H)$ and $x_i=0$ otherwise. Then, in $G$ we get
\[
\lma(G)\ge\frac{Q_p(x)}{\pn{x}^p}=\frac{2^p|E(H)|+\cut_G(V(H))}{|V(H)|}=\lma(H)+\frac{\cut_G(V(H))}{|V(H)|}\ge \lma(G)
\]
and so $\cut_G(V(H))=0.$ Connectedness of $G$ implies $V=V(H).$ So $G$ is a $\chi$-chromatic, $\chi-1$-regular graph and therefore by the theorem of Brooks \cite{Brooks41} a complete graph or an odd cycle. 

\noindent
2. As in \cite[Theorem 2.11]{LOAN11}. The second assertion is shown in the above example. 

\noindent
3. Here we slightly deviate from \cite{LOAN11} but get the same result for $p=2.$ With 2 we have
\[\begin{split}
 \lma-q_p&\stackrel{\textnormal{item 2}}{\ge} \lma -\frac{2m}{n}\cdot \frac{\chi-2}{\chi-1} \cdot \frac{(\chi-2)^{p-1}+2^{p-1}}{(\chi-1)^{p-1}+1}\\
&\stackrel{\textnormal{Lem. }\ref{lem:degreebound}}{\ge} \lma-\frac{\lma}{2^{p-1}}\cdot \frac{\chi-2}{\chi-1} \cdot \frac{(\chi-2)^{p-1}+2^{p-1}}{(\chi-1)^{p-1}+1}\\
&\stackrel{\textnormal{item 1}}{\ge}2^{p-1}(\chi-1)-(\chi-2)\frac{(\chi-2)^{p-1}+2^{p-1}}{(\chi-1)^{p-1}+1}.
\end{split}
\]
If equality holds in the last step then $G$ is the complete graph $K_n$ or an odd cycle $C_n$ by 1. If $G=C_n,$ $n\ge 5,$ then the first inequality is strict because the bound in item 2 for an odd cycle reads $q_p(C_n)\le 1$ and actually $q_p(C_n)< 1$ by the above remarks on $C_n.$ Therefore $G=K_n$ and item 2 reads $q_p(K_n)\le (n-2)\frac{(n-2)^{p-1}+2^{p-1}}{(n-1)^{p-1}+1}$ and the above remarks showed that the inequality is strict for $p\neq 2.$ Hence equality in item 3 for $p\neq 2$ is impossible.    

\noindent
4. As in \cite[Theorem 2.1]{FallatFan12}.
\end{proof}

\noindent
\textbf{Remark:} In \cite[Theorem 2.11]{LOAN11} (item 2 for $p=2$) it says ''If $G$ is a regular $\chi$-partite graph then equality holds.`` This is in general not true as odd cycles ($n\ge 5$) show. Their proof of Corollary 2.12 is unaffected by this as it uses their Theorem 2.10.

\smallskip

\noindent
\textbf{Remark:} In the limit $p\to 1$ item 1 becomes Brooks' Theorem, items 2--4 become $\psi\le \frac{2m}{n}\cdot \frac{\chi-2}{\chi-1},$ $\psi\le \Delta -1$ and $\psi\le \nu,$ respectively.  The latter is easily seen combinatorially: For $S\subseteq V$ with $G-S$ bipartite and $|S|=\nu$ we have by the definition of $\psi:$ $\psi\le \frac{\cut(V\setminus S)}{n-\nu}=\frac{\cut(V\setminus S)}{(n-\nu)\nu}\nu\le \nu.$


\section{Concluding remarks}

In \cite{Chang16} spectral properties of the 1-Laplacian are explored. The eigenequations are then a nonlinear system involving set valued functions (gradients are replaced by subdifferentials) and it is found that the smallest non-zero eigenvalue is indeed $i(G).$ A similar study might be possible for the signless 1-Laplacian $Q_1$ as well.

As for practical matters it would be interesting to know how well the method proposed by Corollary \ref{coro:main} can be put to use for the questions raised in \cite{KirklandPaul11}. It might be possible to adapt B\"uhler's and Hein's $p$-spectral clustering method to the computation of $\psi.$

\smallskip

\noindent
\textbf{Acknowledgments}
This work was partially supported by CAPES Grant PROBRAL 408/13 - Brazil
and DAAD PROBRAL Grant 56267227 - Germany.

\end{document}